\theoremstyle{plain}
\newtheorem{theorem}{Theorem}[section]
\newtheorem{cor}[theorem]{Corollary}
\newtheorem{lemma}[theorem]{Lemma}
\theoremstyle{definition}
\newtheorem{rmk}[theorem]{Remark}
\newtheorem*{ex*}{Example}
\newcommand\sO{{\mathcal O}}
\newcommand\sF{{\mathcal F}}
\newcommand{\pic}[1]{{\rm Pic}^0(#1)}
\title{On the vanishing of weight one Koszul cohomology of abelian varieties}
\author{Marian Aprodu}
\address{University of Bucharest, Faculty of Mathematics and Computer Science, 14 Academiei Str., 010014 Bucharest, Romania}
\email{marian.aprodu@fmi.unibuc.ro}
\address{Simion Stoilow Institute of Mathematics of the Romanian Academy, P.O. Box 1-764, 014700 Bucharest, Romania}
\email{Marian.Aprodu@imar.ro}
\author{Luigi Lombardi}
\address{Mathematisches Institut, Universit\"at Bonn, Endenicher Allee, 60, Bonn, 53115 Germany}
\email{lombardi@math.uni-bonn.de}
\thanks{This work has started while MA was visiting the Max Planck Institute for Mathematics in Bonn (MPIM). MA thanks the MPIM for hospitality. 
MA was partly supported by the CNCS-UEFISCDI grant PN-II-ID-PCE-2012-4-0156. LL thanks the 
``Simion Stoilow'' Institute of Mathematics in Bucharest for hospitality during the preparation of this work. LL was  supported by the SFB/TR45 
``Periods, moduli spaces, and arithmetic of algebraic varieties'' of the DFG. The authors are grateful to G. Pareschi and M. Popa for having made pertinent remarks on an early version of the manuscript.}
\begin{document}
\maketitle

\begin{abstract}
In this Note we prove the vanishing of (twisted) Koszul cohomology groups $K_{p,1}$ of an abelian variety with values in powers of an ample line bundle. It complements the work of G. Pareschi on the property $(N_p)$ \cite{Pa}.
\end{abstract}

Let $L$ be an ample line bundle on a complex abelian variety $X$. The powers of $L$ have very interesting geometric properties. 
It is classically known that $L^2$ is globally generated, $L^3$ is very ample, and the image of $X$ in $\mathbb PH^0(X,L^3)^*$ is projectively 
normal \cite{BL}. For the fourth power of $L$, Kempf proved that the ideal of $X$ is generated by quadrics \cite{BL}. 
The syzygies of $X$ via the embeddings given by the powers $L^a$ acquire some regular behavior when $a$ grows. 
Specifically, a remarkable conjecture of R. Lazarsfeld, solved by G. Pareschi \cite{Pa}, states that for any $a\ge p+3$, $L^a$ 
has property $(N_p)$. This means that the image of $X$ is projectively normal, 
the ideal is generated by quadrics, and all the syzygies up to the $p$--th step are linear.  
(Syzygies of abelian varieties have also been studied further in \cite{PP2} and \cite{LPP}.)
The property $(N_p)$ describes the beginning of the minimal resolution of the ideal of $X$ \cite{Green}. 
It is natural to ask what happens towards the end of the minimal resolution of the ideal of $X$ 
embedded by powers of $L$ and if a similar type of regularity can be described. 
More precisely, the question raised here is which syzygies at the end of the minimal resolution are zero. 
Since syzygies can be computed by Koszul cohomology \cite{Green}, the question reduces to verifying the vanishing of corresponding Koszul 
cohomology groups.

In this Note, we prove the vanishing of Koszul cohomology groups $K_{p,1}(X,L^a)$ for suitable $p$ and $a$, Theorem \ref{main}, 
and we remark that there is a linear function in $a$ such that number of 
the zeroes at the end of the linear strand of the Betti table of $X$ with values in $L^a$ is bounded from below by this function. 
To this end, we reduce the vanishing of Koszul cohomology to the surjectivity of some suitable multiplication maps, Lemma \ref{vanishing}. 
Cokernels of multiplication maps can be expressed as twisted Koszul cohomology groups \cite{Green}, hence a  good control of twisted Koszul 
cohomology yields the surjectivity of multiplication maps. Here, we apply a reversed principle.  
To verify the surjectivity of multiplication maps we use the theory developed by G. Pareschi and M. Popa in 
\cite{Pa}, \cite{PP1}, \cite{PP2}, \cite{PP3}, and hence we eventually relate it to the vanishing of syzygies in the linear strand. 
Note that one of the main ideas in Pareschi's proof of Lazarsfeld's conjecture was also a reduction to the surjectivity of some multiplication 
maps \cite{Pa}. Another result of our Note is Theorem \ref{main3} where we obtain the vanishing of 
twisted Koszul cohomology groups  $K_{p,1}(X,B;L)$ where $L$ satisfies some positivity assumptions and $B$ is a line bundle such 
that $L-B$ is ample. As a consequence we prove in Corollary \ref{asymptotic}  an asymptotic vanishing result for line bundles of type $dL+P$ 
where $L$ is ample and globally generated and $P$ is 
arbitrary, in the spirit of \cite{EL2}.
Throughout the paper, we work over the field of complex numbers.

\section{Preliminaries}

\subsection{Regularity on abelian varieties}
Let $(X,H)$ be a polarized abelian variety and $\sF$ a coherent sheaf on $X$. We define the \emph{non-vanishing loci associated to} $\sF$ as
the algebraic closed subsets:
$$V^i(\sF):=\big\{\alpha \in \pic{X} \, | \, H^i(X,\sF \otimes \alpha)\neq 0 \big\}.$$ 
We say that $\sF$ is $M$\emph{-regular} if ${\rm codim}_{\pic{A}}\,V^i(\sF)>i$ for all $i>0$. 
Moreover we say that $\sF$ satisfies $I.T.0$ if $V^i(\sF)=\emptyset$ for all $i>0$. 
Note that a line bundle on an abelian variety is ample if and only if it is $M$-regular, and hence $I.T.0$ \cite{PP}. 
Recall also that the tensor product of an $I.T.0$ bundle with a nef line bundle is $I.T.0$ 
(\cite[Theorem B]{PPgv} and \cite[Proposition 3.1]{PP3}), 
and the tensor product of two $I.T.0$ bundles remains $I.T.0$ \cite[Proposition 2.9]{PP1}. 
Pareschi and Popa proved furthermore that the tensor product of two $M$-regular bundles is $M$-regular \cite[Theorem 3.9]{PP}.

\subsection{Surjectivity of multiplication maps}
We recall some results on the surjectivity of multiplication maps of global sections of vector bundles on abelian varieties. These results will be 
our main tools to check property $(M_q)$ on abelian varieties. 

Given an abelian variety $X$, we denote by $m:X\times X\rightarrow X$ the multiplication map and by $p_1,p_2:X\times X\rightarrow X$ the projections 
onto the first and second factor, respectively. For a line bundle $L$ and a vector bundle $E$ on $X$ such that $L\otimes E$ satisfies $I.T.0$, 
we define the \emph{skew Pontrjagin product} of $L$ with $E$ as the vector bundle 
$$L\, \widehat{*}\, E \, := \, p_{1*}\big(m^*L\otimes p_2^*E\big).$$
This bundle is one of the main objects used by Pareschi in \cite{Pa} to prove the $(N_p)$ property of line bundles on abelian varieties.

\begin{theorem}[Kempf, Pareschi, Pareschi--Popa]
\label{surjthm}
  Let $(X,H)$ be a polarized abelian variety and $E$ and $F$ be vector bundles on $X$. Moreover let $L$ be a line bundle. 
  \begin{itemize}
\item[(i)] (\cite[Theorem 7.34]{PP3}) If 
$E(-2H)$ and $F(-2H)$ are $M$-regular sheaves, then the multiplication map  $H^0(X,E)\otimes H^0(X,F)\rightarrow H^0(X,E\otimes F)$ is surjective.  
\\

\item[(ii)] (\cite[Theorem 3.1]{Pa}) 
   If both $L\otimes E$ and 
   $(L\, \widehat{*}\, E)(-H)$ satisfy $I.T.0$, then the multiplication map
   $H^0(X,L)\otimes H^0(X,E)\rightarrow H^0(X,L\otimes E)$ is surjective.
\\ 
 
\item[(iii)]  (Kempf, see \cite[\S3, p. 660]{Pa}) 
   If for some integer $a>1$ the bundle $E\otimes L^a$ satisfies $I.T.0$ and 
   $H^i \big(X,(L^a \, \widehat{*} \, (L^{-1}\otimes \alpha)) \otimes E \big)=0$ for all $i>0$ and $\alpha \in \pic{X}$, then the multiplication map 
   $H^0(X,L^a)\otimes H^0(X,E)\rightarrow H^0(X,L^a\otimes E)$ is surjective.
 \end{itemize}
 \end{theorem}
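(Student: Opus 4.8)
The three parts share a common reduction, so the plan is to set this up once and then feed in the separate hypotheses. For sheaves (or line bundle) $A$ and $F$ on $X$, the multiplication map $H^0(X,A)\otimes H^0(X,F)\to H^0(X,A\otimes F)$ is, after identifying the source by K\"unneth, exactly the restriction-to-the-diagonal map $H^0\big(X\times X,p_1^*A\otimes p_2^*F\big)\to H^0\big(\Delta,(A\otimes F)|_\Delta\big)$, since $\Delta^*(p_1^*A\otimes p_2^*F)=A\otimes F$. Hence, from the ideal sequence of $\Delta$, it suffices to establish $H^1\big(X\times X,\sI_\Delta\otimes p_1^*A\otimes p_2^*F\big)=0$. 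The automorphism $\eta(x,y)=(x-y,y)$ carries $\Delta$ to the fibre $\{0\}\times X$ and turns $p_1^*A$ into $m^*A$ while fixing $p_2^*F$, so the target vanishing becomes $H^1\big(X\times X,\sI_{\{0\}\times X}\otimes m^*A\otimes p_2^*F\big)=0$. It is worth recording the equivalent reformulation through the kernel bundle $M_A=\ker\big(H^0(X,A)\otimes\sO_X\to A\big)$, namely $H^1(X,M_A\otimes F)=0$, which is the cleanest form for part (iii).

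Next I would push forward along $p_1$. Because $m$ restricts on each fibre $\{x\}\times X$ to the translation $t_x$, the sheaf $m^*A\otimes p_2^*F$ has fibrewise cohomology $H^\bullet(X,t_x^*A\otimes F)$, and the skew Pontrjagin product is precisely $A\,\widehat{*}\,F=p_{1*}(m^*A\otimes p_2^*F)$. The strategy is then: first use the $I.T.0$ hypothesis to force $R^{>0}p_{1*}=0$ (so that $A\,\widehat{*}\,F$ is a genuine vector bundle and base change applies, using that a $\pic{X}$-translate of an $I.T.0$ sheaf is again $I.T.0$), and then read off the obstructing $H^1$ from the Leray spectral sequence together with the cohomology of $A\,\widehat{*}\,F$. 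For part (ii), with $A=L$, the $I.T.0$ of $L\otimes E$ controls the pushforward while the $I.T.0$ of $(L\,\widehat{*}\,E)(-H)$ supplies the positivity: a sheaf whose twist by $-H$ is $I.T.0$ is continuously globally generated, and tensoring with the ample $\sO_X(H)$ makes $L\,\widehat{*}\,E$ globally generated, which is exactly what yields surjectivity on the fibre over the origin. For part (iii) this is Kempf's point: the $I.T.0$ of $E\otimes L^a$ makes the pushforward well-behaved, and the hypothesis $H^i\big((L^a\,\widehat{*}\,(L^{-1}\otimes\alpha))\otimes E\big)=0$ for all $i>0$ and all $\alpha\in\pic{X}$ is a Generic-Vanishing-type condition which, realised through the Fourier--Mukai transform with $\alpha$ ranging over $\pic{X}$, converts into the single vanishing $H^1(X,M_{L^a}\otimes E)=0$.

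Part (i) has a different flavour, since neither factor is a line bundle and the input is $M$-regularity rather than $I.T.0$. Here I would invoke the Pareschi--Popa theory directly: $M$-regular sheaves are continuously globally generated, the tensor product of two $M$-regular sheaves is again $M$-regular, and continuous global generation of one factor against enough positivity in the other upgrades to honest global generation and to surjectivity of multiplication. The two twists by $H$, one absorbed into each of $E$ and $F$, are what convert the merely continuous (generic) statements coming from $M$-regularity of $E(-2H)$ and $F(-2H)$ into the actual surjectivity of $H^0(X,E)\otimes H^0(X,F)\to H^0(X,E\otimes F)$; concretely one writes $E=E(-2H)\otimes\sO_X(2H)$ and distributes the polarization so that a continuously globally generated sheaf is paired with a globally generated, indeed very ample, one.

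The hardest part, in all three cases, is the control of the pushforward $p_{1*}(m^*A\otimes p_2^*F)$ and of its cohomology, that is, showing that the stated hypotheses genuinely force both $R^{>0}p_{1*}=0$ and the vanishing of the obstructing $H^1$ on $X\times X$, equivalently that restriction to the fibre over the origin is surjective. This is exactly where $M$-regularity must be upgraded to (continuous) global generation and where the extra twist by $H$ (respectively $2H$) is consumed; everything else is formal manipulation with K\"unneth, the projection formula, and cohomology and base change.
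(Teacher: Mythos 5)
The first thing to say is that the paper does not prove this statement at all: Theorem \ref{surjthm} is a compilation of results quoted from the literature, with (i) attributed to \cite[Theorem 7.34]{PP3}, (ii) to \cite[Theorem 3.1]{Pa}, and (iii) to Kempf via \cite[\S3]{Pa}. There is therefore no in-paper argument to compare yours against; what can be judged is whether your sketch reconstructs the cited proofs. For (ii) it essentially does: reducing surjectivity of $H^0(X,L)\otimes H^0(X,E)\to H^0(X,L\otimes E)$ to global generation of $L\,\widehat{*}\,E$ at the origin (K\"unneth, the automorphism $(x,y)\mapsto (x+y,y)$ carrying $\{0\}\times X$ to $\Delta$ and $p_1^*L$ to $m^*L$, cohomology and base change for $p_1$ --- which is exactly where $I.T.0$ of $L\otimes E$ enters), and then using the Pareschi--Popa chain $I.T.0\Rightarrow M$-regular $\Rightarrow$ continuously globally generated, together with the fact that the tensor product of a continuously globally generated sheaf with an ample (hence continuously globally generated) line bundle is globally generated, is Pareschi's argument. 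For (i) you do not so much prove the statement as list the Pareschi--Popa results that constitute a proof; for a quoted theorem that is defensible, but it is a citation rather than an argument.

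The genuine gap is in (iii). The entire content of Kempf's lemma is the passage from the hypothesis $H^i\big(X,(L^a\,\widehat{*}\,(L^{-1}\otimes\alpha))\otimes E\big)=0$ for all $i>0$ and all $\alpha\in\pic{X}$ to the vanishing $H^1(X,M_{L^a}\otimes E)=0$, and this is precisely the step you replace by the phrase ``a Generic-Vanishing-type condition which, realised through the Fourier--Mukai transform, converts into the single vanishing.'' That is a restatement, not an argument. The actual mechanism uses the hypothesis $a>1$ in an essential way --- one splits $L^a=L^{a-1}\otimes L$ and brings in the isogeny $(x,y)\mapsto(x+y,x-y)$ (equivalently, a resolution of $M_{L^a}\otimes E$ built out of the bundles $L^a\,\widehat{*}\,(L^{-1}\otimes\alpha)$ as $\alpha$ varies); only after this geometric input do the assumed vanishings for every $\alpha$ become the relevant obstruction groups. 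Note also that the framework you set up for (ii) does not specialize to (iii): there the bundle whose higher cohomology is assumed to vanish is $(L^a\,\widehat{*}\,(L^{-1}\otimes\alpha))\otimes E$, not a twist of $L^a\,\widehat{*}\,E$, so the ``global generation at the origin'' route is not the one being used, and your sketch never explains where the condition $a>1$ is consumed. As written, part (iii) of your proposal asserts the theorem rather than proving it.
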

 
 \subsection{Koszul cohomology and property $(M_q)$}
 This time we denote by $X$ an arbitrary smooth projective variety. Let $L$ be a globally generated line bundle on $X$ and denote by 
 $$M_L \, := \, {\rm ker}\big(H^0(X,L)\otimes \sO_X\stackrel{ev}{\longrightarrow} L\big)$$ the kernel bundle associated to $L$ defined as kernel 
 of the evaluation map of $L$. Note that the 
 rank of $M_L$ is $r:=h^0(X,L)-1$.
 Moreover let $B$ be an arbitrary line bundle on $X$. For integers 
 $p\geq 0$ and $q\geq 0$ we define  \emph{twisted Koszul cohomology} groups as (R. Lazarsfeld \cite{L}, \emph{cf}. also \cite[Remark 2.7]{AN}):
 $$K_{p,q}(X,B;L) \; := \; {\rm ker}\big( H^1(X,\wedge^{p+1}M_L\otimes L^{q-1}\otimes B)\stackrel{\varphi_{p,q,B,L}}{\longrightarrow} \wedge^{p+1}H^0(X,L)\otimes H^1(X,L^{q-1}\otimes B)\big)$$
 where the map $\varphi_{p,q,B,L}$ is induced by the following short exact sequence passing to cohomology
 $$0\longrightarrow \wedge^{p+1}M_L\otimes L^{q-1}\otimes B \longrightarrow \wedge^{p+1} H^0(X,L)\otimes L^{q-1}\otimes B \longrightarrow \wedge^pM_L \otimes L^q \otimes B\longrightarrow 0.$$
The word ``twisted'' is related to the presence of $B$ in the picture. If  $B\simeq\mathcal \sO_X$, then we simply speak about {\em Koszul cohomology}.  As a matter of notation, if $B$ is trivial, we  set $K_{p,q}(X,L):=K_{p,q}(X,\mathcal O_X;L)$. If $L$ is very ample and the image of $X$ is projectively normal, then Koszul cohomology groups compute the syzygies of $X$ \cite{Green}. The table formed with the dimensions of the spaces $K_{p,q}(X,L)$ varying $p$ and $q$ is called the \emph{Betti table} of $X$ with values in $L$.
 
The Koszul cohomology groups $K_{p,1}(X,L)$ play a particularly important role. From the syzygetic view-point, $K_{1,1}(X,L)$ computes the number of quadrics in the ideal of the image of $X$, $K_{2,1}(X,L)$ computes the linear relations among those quadrics, $K_{3,1}(X,L)$ computes the linear relations among the linear relations among those quadrics etc. For this reason, the raw in the Betti table corresponding the $K_{p,1}$'s is called the \emph{linear strand}. 

An important problem in syzygy theory is to know when we stop having linear syzygies. This is controlled by the following definition.
We say that {\em $L$ satisfies property $(M_q)$} if 
 $$K_{p,1}(X,L) \; = \; 0 \quad \mbox{ for \; all } \quad p\geq r-q.$$
 This property $(M_q)$ was introduced by M. Green and R. Lazarsfeld in a slightly different form \cite{GL}.  
One manifestation of this property is in connection with the gonality of curves. 
It has been conjectured by Green and Lazarsfeld, and proved recently by L. Ein and R. Lazarsfeld \cite{EL2}, that on a 
smooth curve $X$ of gonality $d$ we have $K_{p,1}(X,L)=0$ for any line bundle $L$ of sufficiently large degree and 
any $p\ge h^0(X,L)-d$. In other words, the property $(M_{d+1})$ holds for $L$. Hence the gonality of a curve can be read 
off from the tail of linear strand of the Betti table. We remark that 
for line bundles of sufficiently large degree on a curve there is no difference 
between the original definition of $(M_q)$ and the definition we work with.

On varieties of arbitrary dimension a geometric manifestation of (the failure of) the property $(M_q)$ is through syzygy 
schemes \cite{Green}, \cite{Ehbauer}, \cite{AN}. A {\em syzygy scheme} is a scheme containing $X$ whose equations are the quadrics 
involved in a given non-zero class in $K_{p,1}(X,L)$. The intersection of all the syzygy schemes associated to classes in $K_{p,1}(X,L)$ is 
called the {\em syzygy scheme of weight $p$}. If $p$ is very close to $r$, 
the syzygy schemes of weight $p$ can be classified, and turn out to be 
varieties of small degree \cite[Theorem (3.c.1)]{Green}, \cite[Theorem 7.1]{Ehbauer}. Hence one reason for the non-vanishing of $K_{p,1}$ 
for large $p$ is that the original variety lies on varieties with special geometry. The classification of syzygy schemes for smaller $p$ is a 
very interesting  open  problem.

\section{Twisted Koszul cohomology groups of weight one}

In this section we study the vanishing of Koszul cohomology groups of type
$K_{p,1}(X,B;L)$ where $X$ is an abelian variety of dimension $n$, $L$ is a sufficiently ample line bundle with $r:=h^0(X,L)-1$, 
and $B$ is a line bundle such that $L-B$ is ample. 
The property $(M_q)$ for $L$ follows at once by applying our vanishings to $B\simeq\sO_X$. From the $K_{p,1}$-Theorem \cite[Theorem (3.c.1)]{Green} 
we obtain the vanishing of $K_{r-n,1}(X,L)$. However, we can prove much more using the techniques developed by Pareschi and Popa.

\subsection{Sufficient conditions for the vanishing of $K_{p,1}$}
The following lemma gives a sufficient criterion for the vanishing 
of twisted Koszul cohomology groups of type $K_{p,1}(X,B;L)$ 
in terms of multiplication maps of global sections of special vector bundles. 
We remark that it works for any complex smooth projective variety and not just for abelian varieties.
\begin{lemma}
\label{thm-main}
\label{vanishing}
Let $X$ be a complex smooth projective variety of dimension $n\ge 2$ 
and $L$ be an ample and globally generated line bundle. Set $r:=h^0(X,L)-1$ and let $1\le p\le r-n$ be an integer. Moreover let  
$B$ be another line bundle on $X$ such that $L-B$ is ample and denote by $\omega_X$ the canonical bundle of $X$. 
If the multiplication maps 
$$\mu_k \, : \, H^0(X,L)\otimes H^0\big(X,M_L^{\otimes k} \otimes \omega_X \otimes L^{n-1}\otimes B^{-1} \big)\longrightarrow 
H^0 \big(X,M_L^{\otimes k} \otimes \omega_X \otimes L^{n}\otimes B^{-1} \big)$$
are surjective for all $k=0,\ldots,r-n-p$, then $H^1(X,\wedge^{p+1}M_L\otimes B)=0$ and, in particular, $K_{p,1}(X,B;L)=0$.
\end{lemma}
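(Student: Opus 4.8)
The plan is to convert the weight-one Koszul group into a single ordinary cohomology vanishing and then to climb the cohomological degree by a double induction that is fed, at its bottom rung, by precisely the maps $\mu_k$ and, at every base step, by Kodaira vanishing. Since $L^{q-1}=\sO_X$ for $q=1$, the definition exhibits $K_{p,1}(X,B;L)$ as a subspace of $H^1(X,\wedge^{p+1}M_L\otimes B)$, so the final clause is automatic and the whole statement reduces to $H^1(X,\wedge^{p+1}M_L\otimes B)=0$. To relate this to the $\mu_k$, which carry the twist $\omega_X\otimes L^{n-1}\otimes B^{-1}$, I would dualize. Taking determinants in the defining sequence $0\to M_L\to H^0(X,L)\otimes\sO_X\to L\to0$ gives $\det M_L\cong L^{-1}$, whence $\wedge^{p+1}M_L^{\vee}\cong\wedge^{r-p-1}M_L\otimes L$; combining this with Serre duality yields
$$H^1(X,\wedge^{p+1}M_L\otimes B)^{\vee}\;\cong\;H^{n-1}\big(X,\wedge^{r-p-1}M_L\otimes\omega_X\otimes L\otimes B^{-1}\big).$$
Since we work over the complex numbers, $\wedge^{r-p-1}M_L$ is a direct summand of $M_L^{\otimes(r-p-1)}$, so it is enough to prove
$$H^{n-1}\big(X,M_L^{\otimes(r-p-1)}\otimes\omega_X\otimes L\otimes B^{-1}\big)=0.\qquad(\star)$$

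The core is a vanishing uniform in the cohomological degree. For $1\le i\le n-1$ and $t\ge0$ put
$$\mathcal H(i,t):=H^i\big(X,M_L^{\otimes t}\otimes\omega_X\otimes L^{\,n-i}\otimes B^{-1}\big),$$
so that $(\star)$ is the single instance $\mathcal H(n-1,r-p-1)=0$. I claim $\mathcal H(i,t)=0$ whenever $0\le t\le r-n-p+i$, and I would prove this by induction on $i$, and for each fixed $i$ by an inner induction on $t$. The engine is the sequence obtained by tensoring the defining sequence with $\mathcal G:=M_L^{\otimes(t-1)}\otimes\omega_X\otimes L^{\,n-i}\otimes B^{-1}$, whose sub-, middle, and quotient terms are $M_L^{\otimes t}\otimes\omega_X\otimes L^{\,n-i}\otimes B^{-1}$, the bundle $H^0(X,L)\otimes\mathcal G$, and $M_L^{\otimes(t-1)}\otimes\omega_X\otimes L^{\,n-(i-1)}\otimes B^{-1}$ respectively, the last identification using $L^{\,n-i+1}=L^{\,n-(i-1)}$. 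The relevant stretch of the associated long exact sequence is
$$H^0(X,L)\otimes H^{i-1}(X,\mathcal G)\xrightarrow{\ \alpha\ }\mathcal H(i-1,t-1)\longrightarrow\mathcal H(i,t)\longrightarrow H^0(X,L)\otimes\mathcal H(i,t-1).$$

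At every base case $t=0$ the sheaf is $\omega_X\otimes L^{\,n-i}\otimes B^{-1}$ with $L^{\,n-i}\otimes B^{-1}=L^{\,n-i-1}\otimes(L-B)$ ample (since $0\le i\le n-1$ and $L-B$ is ample), so $\mathcal H(i,0)=0$ by Kodaira vanishing. For the inductive step, the inner hypothesis $\mathcal H(i,t-1)=0$ kills the right-hand term, leaving $\mathcal H(i,t)$ equal to the cokernel of $\alpha$; it therefore suffices to make $\alpha$ surjective. Here the two ends of the induction behave differently. For $i\ge2$ the target $\mathcal H(i-1,t-1)$ of $\alpha$ already vanishes by the outer hypothesis (valid because $t-1\le r-n-p+(i-1)$), so surjectivity is free. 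For $i=1$ the target is the genuine section space $H^0\big(X,M_L^{\otimes(t-1)}\otimes\omega_X\otimes L^{\,n}\otimes B^{-1}\big)$ and $\alpha$ is literally the multiplication map $\mu_{t-1}$, whose surjectivity for $t-1=0,\dots,r-n-p$ is exactly the hypothesis. Thus $\mathcal H(1,t)=0$ for $0\le t\le r-n-p+1$, and the admissible range of $t$ grows by one with each increment of $i$, reaching $r-p-1$ at $i=n-1$; this gives $(\star)$ and hence the lemma.

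I expect the only real difficulty to be organizational: one must keep the three parameters --- cohomological degree $i$, the twisting power $L^{\,n-i}$, and the tensor power $t$ --- moving in lockstep, so that the connecting maps of the successive long exact sequences assemble into a ladder landing exactly on $\mathcal H(n-1,r-p-1)$. The two matchings to verify with care are that the bottom rung $i=1$ reproduces the indexing $k=t-1$ of the given maps $\mu_k$, and that the base rung $t=0$ of every degree is covered by Kodaira vanishing; granting these, the intermediate degrees $2\le i\le n-1$ cost nothing beyond the previous degree, since there the map $\alpha$ surjects onto a group already known to vanish.
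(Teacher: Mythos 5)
Your proof is correct and follows essentially the same route as the paper: Serre duality together with $\wedge^{p+1}M_L^{\vee}\simeq\wedge^{r-p-1}M_L\otimes L$, the passage from exterior powers to tensor powers as direct summands, Kodaira vanishing at the base, and the defining sequence of $M_L$ fed by the maps $\mu_k$. The only difference is organizational: the paper first shifts the cohomological degree from $n-1$ down to $1$ using the exterior-power sequences (whose middle terms are Kodaira-acyclic, so this step gives isomorphisms) and only then inducts on the tensor power in $H^1$, whereas you pass to tensor powers at the outset and run a double induction on $(i,t)$, using the inner induction to kill the middle terms; both bookkeepings land on the same vanishing.
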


\proof
The vanishing of $H^1 \big(X,\wedge^{p+1}M_L\otimes B \big)$ is equivalent, via Serre duality and the isomorphism $\wedge^{p+1}M_L^{\vee} \simeq 
\wedge^{r-p-1}M_L\otimes L$, to the vanishing
\begin{equation}\label{eq22}
H^{n-1}\big(X,\wedge^{r-p-1}M_L\otimes \omega_X\otimes L\otimes B^{-1} \big) \, = \, 0.
\end{equation}
We claim there is an isomorphism 
\[
H^{n-1} \big(X,\wedge^{r-p-1}M_L\otimes \omega_X\otimes L\otimes B^{-1} \big)\simeq H^1 
\big(X,\wedge^{r-p-n+1}M_L\otimes \omega_X\otimes L^{n-1}\otimes B^{-1} \big).
\]
This is clear for $n=2$, while for $n>2$ it follows from Kodaira vanishing applied to the following exact sequences:
{
\[
0\to \wedge^{r-p-1}M_L\otimes \omega_X\otimes L\otimes B^{-1} \to \wedge^{r-p-1}H^0(X,L)\otimes \omega_X\otimes L\otimes B^{-1} \to
\]
\[
\to
 \wedge^{r-p-2}M_L\otimes \omega_X\otimes L^2\otimes B^{-1} \to 0
\]

\[
0\to \wedge^{r-p-2}M_L\otimes \omega_X\otimes L^2\otimes B^{-1} \to\wedge^{r-p-2}H^0(X,L)\otimes \omega_X\otimes L^2\otimes B^{-1} \to 
\]
\[
\to\wedge^{r-p-3}M_L\otimes \omega_X\otimes L^2\otimes B^{-1} \to0
\]

\[
\ldots
\]

\[
0\to  \wedge^{r-p-n+2}M_L\otimes \omega_X\otimes L^{n-2}\otimes B^{-1}\to \wedge^{r-p-n+2}H^0(X,L)\otimes \omega_X\otimes L^{n-2} \otimes B^{-1}\to 
\]
\[
\to\wedge^{r-p-n+1}M_L\otimes \omega_X\otimes L^{n-1}\otimes B^{-1} \to 0.
\]
}
Since in characteristic zero exterior powers are direct summands of tensor products, in order to prove \eqref{eq22} it suffices to show the 
vanishing
\begin{equation}
\label{h1=0}
H^1 \big(X,M_L^{\otimes (r-p-n+1)}\otimes \omega_X\otimes L^{n-1}\otimes B^{-1} \big) \, = \, 0.
\end{equation}
To this end, we note that the surjectivity of $\mu_0$, Kodaira vanishing,
and the following exact sequence
\[
 0\to M_L\otimes  \omega_X\otimes L^{n-1}\otimes B^{-1} \to H^0(X,L)\otimes \omega_X\otimes L^{n-1}\otimes B^{-1}\to  L^n\otimes \omega_X
 \otimes B^{-1}\to 0
\]
immediately yield
\begin{equation}\label{M1}
H^1 \big(X,M_L\otimes \omega_X\otimes L^{n-1}\otimes B^{-1} \big) \, = \, 0.
\end{equation}
By tensorizing the previous sequence by $M_L$, we see that the vanishing in \eqref{M1}, together with the surjectivity of $\mu_1$, yields the vanishing
$$H^1 \big(X,M_L^{\otimes 2}\otimes \omega_X\otimes L^{n-1}\otimes B^{-1} \big) \, = \, 0.$$ 
Therefore we get (\ref{h1=0}) by proceeding in this way $r-n-p$ times.

\endproof

\begin{rmk}
The previous theorem holds in particular if $B^{-1}$ is nef, but more generally if one has the following vanishings
\[
H^1(X,\omega_X\otimes L^{n-1}\otimes B^{-1}) \, = \, H^1(X,\omega_X \otimes L^{n-1}\otimes B^{-1}) \, = \, 
H^2(X,\omega_X \otimes L^{n-2}\otimes B^{-1}) \, = \, \ldots \, = \, 
 \]
 \[
 =\, H^{n-3}(X,\omega_X\otimes L^2\otimes B^{-1}) \, = \, H^{n-2}(X,\omega_X\otimes L^2\otimes B^{-1}) \, = \, H^{n-2}(X,\omega_X\otimes L\otimes B^{-1}) \, = 
 \]
 \[= \, H^{n-1}(X,\omega_X \otimes L\otimes B^{-1}) \, = \, 0
\]
in place of the ampleness of $L-B$.
\end{rmk}

\begin{rmk}
 The sufficient conditions in Lemma \ref{vanishing} are not necessary. For instance, let $X$ be a $K3$ surface
 and let $L$ be a very ample line bundle on $X$ such that the hyperplane section is a hyperelliptic curve. For $p=r-2$ and $B\simeq\mathcal O_X$
 we obtain $K_{r-2,1}(X,L)=0$ since $X$ is not of minimal degree \cite[Theorem (3.c.1)]{Green}. On the 
 other hand, the multiplication map
 \[
  H^0(X,L)\otimes H^0(X,L)\to H^0(X,L^2)
 \]
 is not surjective, as the hyperplane section is hyperelliptic. Indeed, the cokernel of this map is isomorphic
 to $K_{0,2}(X,L)$ which is furthermore isomorphic to $K_{0,2}(C,K_C)$ by the hyperplane section Theorem
 on Koszul cohomology, \cite[Theorem (3.b.7)]{Green}.
\end{rmk}

\subsection{Powers of a line bundle}
In case of powers of a fixed line bundle we can use Theorem \ref{surjthm} (iii) 
to establish the surjectivity of multiplication maps appearing in Lemma \ref{vanishing}. We prove:

\begin{theorem}\label{main}
 Let $L$ be an ample line bundle on an abelian variety $X$ of dimension $n\geq 3$.
 Let $a\geq 2$ be an integer and set $r_a:=h^0(X,L^a)-1$. 
 Moreover let $B$ be a line bundle on $X$ such that $bL-B$ is ample for some integer $b\geq 1$.
 Then for $p\leq r_a-n$ we have 
  $$K_{p,1}(X,B;L^a) \, = \, 0$$ 
  
 \noindent provided that 
\begin{equation}\label{d1}
p \, \geq \, r_a\, - \, a\, (n \, - \, 1) \, + \, b\,\Big(1 \, - \, \frac{1}{a}\Big) \quad \mbox{ and }\quad a \, \geq \, b.
\end{equation}

\end{theorem}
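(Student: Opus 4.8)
The plan is to combine Lemma \ref{vanishing}, applied to the polarization $L^a$ in place of $L$, with Kempf's surjectivity criterion, Theorem \ref{surjthm} (iii). Since $X$ is abelian we have $\omega_X\simeq\sO_X$, and Lemma \ref{vanishing} (whose hypothesis $1\le p\le r_a-n$ is part of our assumptions) reduces the vanishing of $K_{p,1}(X,B;L^a)$ to the surjectivity of the multiplication maps
$$\mu_k\,:\,H^0(X,L^a)\otimes H^0\big(X,M_{L^a}^{\otimes k}\otimes L^{a(n-1)}\otimes B^{-1}\big)\longrightarrow H^0\big(X,M_{L^a}^{\otimes k}\otimes L^{an}\otimes B^{-1}\big)$$
for every $k=0,\dots,r_a-n-p$. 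Writing $A:=L^b\otimes B^{-1}$, which is ample by hypothesis, the target bundle is $M_{L^a}^{\otimes k}\otimes L^{an-b}\otimes A$, so the available positivity amounts to $an-b$ powers of $L$ (up to the ample twist $A$), and this budget must be shared among the $k$ negative factors $M_{L^a}$.

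To each $\mu_k$ I would apply Theorem \ref{surjthm} (iii) with $E=M_{L^a}^{\otimes k}\otimes L^{a(n-1)}\otimes B^{-1}$ and with the integer $a\ge 2$. The first hypothesis to check is that $E\otimes L^a\simeq M_{L^a}^{\otimes k}\otimes L^{an}\otimes B^{-1}$ satisfies $I.T.0$. I would prove this by descending induction on $p$ (equivalently, induction on $k$): tensoring the defining sequence $0\to M_{L^a}\to H^0(X,L^a)\otimes\sO_X\to L^a\to 0$ by $M_{L^a}^{\otimes(k-1)}\otimes L^{an}\otimes B^{-1}$ expresses the $I.T.0$ property of $E\otimes L^a$ in terms of the $I.T.0$ property at level $k-1$ (the inductive hypothesis) and the surjectivity, for every $\alpha\in\pic{X}$, of a multiplication map of the same shape as $\mu_{k-1}$ but with one extra power of $L^a$; this auxiliary surjectivity is itself handled by Theorem \ref{surjthm} (iii), while the closure properties recalled in the Preliminaries (tensor products of $I.T.0$ sheaves, and of an $I.T.0$ sheaf with a nef line bundle, remain $I.T.0$) let the ample factor $A$ and the surplus powers of $L$ absorb the rest.

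The second, and genuinely new, hypothesis is the Pontrjagin--product vanishing
$$H^i\big(X,(L^a\,\widehat{*}\,(L^{-1}\otimes\alpha))\otimes E\big)\,=\,0\qquad\text{for all }i>0\text{ and }\alpha\in\pic{X}.$$
Here I would first note that both $L^a\otimes(L^{-1}\otimes\alpha)\simeq L^{a-1}\otimes\alpha$ and $L^a\otimes E$ are $I.T.0$ (the latter being condition (a), already established), so the relevant skew Pontrjagin products are honest vector bundles with vanishing higher direct images. Then, applying the projection formula on $X\times X$ and using the commutativity of the multiplication map $m$ to interchange the two factors, the displayed group is identified with $H^i\big(X,(L^a\,\widehat{*}\,E)\otimes L^{-1}\otimes\alpha\big)$; hence condition (b) becomes the single assertion that $(L^a\,\widehat{*}\,E)\otimes L^{-1}$ satisfies $I.T.0$. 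Establishing this positivity of the skew Pontrjagin product is where I would invoke the full Pareschi--Popa machinery of \cite{Pa,PP1,PP2,PP3}, resolving $M_{L^a}^{\otimes k}$ through the defining sequence and using the exactness of $L^a\,\widehat{*}\,(-)$ on $I.T.0$ inputs to reduce to the positivity of Pontrjagin products of pure powers of $L$.

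The main obstacle is precisely this last positivity statement and the numerical bookkeeping it forces. The difficulty is that the positivity generated by $L^a\,\widehat{*}\,(-)$ is measured in powers of $L^a$, whereas the deficiency $b$ coming from $B^{-1}$ and the twist $L^{-1}$ that must be absorbed are measured in powers of $L$; passing through the Pontrjagin product effectively weights this deficiency by the ratio $\tfrac1a$, which is what produces the fractional term $b\big(1-\tfrac1a\big)$ and the accompanying hypothesis $a\ge b$ in \eqref{d1}. Concretely, I expect the induction of the previous paragraphs to run for exactly as many steps as the positivity budget $L^{a(n-1)}$ allows after paying off the deficiency $b$ and the index shift dictated by the dimension $n$, so that tracking these thresholds carefully---rather than any single further conceptual input---is what pins down the precise range $p\ge r_a-a(n-1)+b\big(1-\tfrac1a\big)$.
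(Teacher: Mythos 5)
Your overall strategy coincides with the paper's: apply Lemma \ref{vanishing} with $\omega_X\simeq\sO_X$ to reduce to the surjectivity of the maps $\mu_k$, and attack each $\mu_k$ with Theorem \ref{surjthm} (iii), handling the hypothesis on $E\otimes L^a$ by an induction through the defining sequence of $M_{L^a}$. The gap is in the step you yourself flag as ``the main obstacle'': you never actually verify the Pontrjagin-product vanishing, and the one tool that makes it verifiable is missing from your sketch. The paper's mechanism is to pull everything back under the isogeny $(a-1)_X$ and invoke \cite[Proposition 3.6]{Pa}, which identifies $(a-1)_X^*\big(L^a\,\widehat{*}\,(L^{-1}\otimes\alpha)\big)$ with a trivial bundle tensored by a line bundle algebraically equivalent to $-a(a-1)L$, while $(a-1)_X^*\big(L^{a(n-1)}\otimes B^{-1}\big)$ is algebraically equivalent to $a(a-1)^2(n-1)L-(a-1)^2B$. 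Ampleness of the resulting line bundle is then an explicit inequality, and dividing it by $a$ is precisely what produces the fractional term $b\big(1-\tfrac1a\big)$ in \eqref{d1}; your heuristic that the deficiency is ``weighted by $\tfrac1a$'' is a description of the answer, not a derivation of it.

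A second, related omission concerns the structure of the induction for $k>0$. Each application of Theorem \ref{surjthm} (iii) to the auxiliary multiplication maps introduces a \emph{new} skew Pontrjagin factor $F_{\alpha_i}(a):=L^a\,\widehat{*}\,(L^{-1}\otimes\alpha_i)$, so after unwinding one must show that $\bigotimes_{i=1}^{k'}F_{\alpha_i}(a)\otimes L^{a(n-1)}\otimes B^{-1}$ satisfies $I.T.0$ for all $k'\le k+1$; each factor contributes negativity $-a(a-1)L$ after pulling back by $(a-1)_X$, and it is this accumulation that makes the final condition depend linearly on $k=r_a-n-p$ and hence yields the bound $p\ge r_a-a(n-1)+b\big(1-\tfrac1a\big)$, as well as the separate condition $a\ge b$ coming from the $k'=1$ (equivalently $k=0$) case. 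Your alternative of swapping the Pontrjagin product onto $E$ via the symmetry $H^i\big((L^a\,\widehat{*}\,q)\otimes r\big)\simeq H^i\big((L^a\,\widehat{*}\,r)\otimes q\big)$ is legitimate in principle, but it trades the easy object $L^a\,\widehat{*}\,(L^{-1}\otimes\alpha)$ (explicitly computable by the isogeny trick) for the hard object $L^a\,\widehat{*}\,E$ with $E$ containing $M_{L^a}^{\otimes k}$, and you give no way to control the latter. As it stands the numerical condition \eqref{d1} is asserted rather than proved.
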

 \begin{proof}

First of all we notice that $L^a$ is globally generated for $a\geq 2$.
According to Lemma \ref{thm-main} we need to prove the surjectivity of multiplication maps 
$$\mu_k \, : \, H^0(X,L^a)\otimes H^0\big(X,M_{L^a}^{\otimes k} \otimes L^{a(n-1)}\otimes B^{-1}\big)\,
\longrightarrow \, H^0\big(X,M_{L^a}^{\otimes k} \otimes L^{an}\otimes B^{-1}\big)$$ for 
$k=0,\ldots , r_a-n-p$ (note that $aL-B$ is ample as $a\geq b$). To this end we use Theorem \ref{surjthm} (iii) and we start with the case $k=0$. 
Then the map 
$$\mu_0 \, : \, H^0(X,L^a)\otimes H^0(X,L^{a(n-1)}\otimes B^{-1}) \, \longrightarrow \,
H^0(X,L^{an}\otimes B^{-1})$$ is surjective as soon as the line bundle
$L^{an}\otimes B^{-1}$ satisfies $I.T.0$ (which is true as $a\geq b> \frac{b}{n}$) and 
\begin{equation}\label{van}
H^i\big(X,L^a\, \widehat{*} \, (L^{-1}\otimes \alpha) \otimes L^{a(n-1)}\otimes B^{-1}\big) \, = \, 0 \quad \mbox{ for \, all }\quad \alpha 
\, \in \, \pic{X} \, \mbox{ and } \, i \, > \, 0.
\end{equation}
To prove these vanishing we pull-back the involved bundles via the multiplication by $(a-1)$ map which we denote by $(a-1)_X:X\rightarrow X$.
Then by \cite[Proposition 3.6]{Pa} the pull-back 
$(a-1)_X^*\big(L^a\, \widehat{*} \, (L^{-1}\otimes \alpha))$ is isomorphic to a trivial bundle times a line bundle algebraically equivalent to 
$\big(-a(a-1)\big)L$. On the other hand, the pull-back $(a-1)_X^* \big(L^{a(n-1)}\otimes B^{-1} \big)$ is isomorphic to a line bundle 
algebraically equivalent to 
$a(a-1)^2(n-1)L-(a-1)^2B$. Hence in order to prove \eqref{van}, by applying Kodaira vanishing, it suffices to prove that the line bundle 
$$\Big(a\, (a\, - \, 1)^2\, (n\, - \, 1) \, - \, a (a-1)\Big) \,L \, - \, (a\, - \, 1)^2 \, B$$ is ample. But this is the case as soon as
\begin{equation}\label{eq21}
a\, (a \, - \, 1)\, (n\, - \, 1) \, - \, a\, \geq \, b\, (a\, - \, 1)
\end{equation}
which holds true by the second inequality of \eqref{d1}.

We assume now $k>0$. In order to prove the surjectivity of $\mu_k$ for $k=1,\ldots, r_a-n-p$, by Theorem \ref{surjthm} (iii)
it suffices to prove the vanishings:
\begin{equation}\label{F0}
 H^i \big(X,M_{L^a}^{\otimes k}\otimes L^{an}\otimes B^{-1}\otimes \beta \big) \, = \, 0\quad \mbox{ for \, all }\quad i \, > \, 0 
 \quad \mbox{ and }\quad \beta \, \in \, \pic{X}
\end{equation}
\begin{equation}\label{F}
H^i\big(X,(L^a \, \widehat{*} \, (L^{-1}\otimes \alpha))\otimes M_{L^a}^{\otimes k}\otimes L^{a(n-1)} \otimes B^{-1}\big) \, = \, 0\quad
\mbox{ for \, all } \quad i \, > \, 0\quad 
\mbox{ and } \quad \alpha \, \in \, \pic{X}.
\end{equation}
Set $F_{\alpha}(a):=L^a \, \widehat{*} \, (L^{-1}\otimes \alpha)$ and consider the short exact sequences:
\[
0\rightarrow M_{L^a}^{\otimes k}\otimes L^{an}\otimes B^{-1}\otimes \beta \rightarrow H^0(X,L^a)\otimes M_{L^a}^{\otimes(k-1)}\otimes L^{an}\otimes B^{-1}\otimes \beta 
\rightarrow \]\[M_{L^a}^{\otimes(k-1)}\otimes L^{a(n+1)}\otimes B^{-1}\otimes \beta \rightarrow 0
\]

\[
0\rightarrow M_{L_a}^{\otimes k}\otimes F_{\alpha}(a)\otimes L^{a(n-1)}\otimes B^{-1} \rightarrow H^0(X,L^a)\otimes M_{L^a}^{\otimes(k-1)}\otimes F_{\alpha}(a)
\otimes L^{a(n-1)}\otimes B^{-1}
\] 
\[
\rightarrow M_{L^a}^{\otimes(k-1)}\otimes F_{\alpha}(a) \otimes L^{an}\otimes B^{-1}\rightarrow 0.
\]

We denote by $\mu(E,E'):H^0(X,E)\otimes H^0(X,E')\rightarrow H^0(X,E\otimes E')$ the multiplication map of global sections of 
two vector bundles on $X$.
We see then that \eqref{F0} holds as soon as the multiplication maps 
\begin{equation}\label{eq23}
\mu\big(L^a,M_{L^a}^{\otimes(k-1)}\otimes L^{an}\otimes B^{-1}\otimes \beta \big)
\end{equation}
are surjective for all 
$\beta$ and both the sheaves $M_{L^a}^{\otimes(k-1)}\otimes L^{an}\otimes B^{-1}$ 
and $M_{L^a}^{\otimes(k-1)}\otimes L^{a(n+1)}\otimes B^{-1}$ satisfy $I.T.0$. On the other hand, 
\eqref{F} holds if 
the multiplication maps 
\begin{equation}\label{eq24}
\mu \big(L^a, M_{L^a}^{\otimes(k-1)}\otimes F_{\alpha}(a)\otimes L^{a(n-1)} \otimes B^{-1}\big)
\end{equation}
are surjective 
for all $\alpha\in \pic{X}$ 
and both the sheaves $M_{L^a}^{\otimes(k-1)}\otimes F_{\alpha}(a) \otimes L^{a(n-1)}\otimes B^{-1}$ and 
$M_{L^a}^{\otimes(k-1)}\otimes F_{\alpha}(a) \otimes L^{an}\otimes B^{-1} $
satisfy $I.T.0$ for all $\alpha \in \pic{A}$. 
Since the tensor product of an $I.T.0$ locally free sheaf with an ample line bundle again satisfies $I.T.0$,  
by applying Theorem \ref{surjthm} (iii) to the maps \eqref{eq23} and \eqref{eq24}, we deduce that \eqref{F0} and \eqref{F} hold if 
$$F_{\beta}(a) \otimes M_{L^a}^{\otimes(k-1)}\otimes L^{a(n-1)}\otimes B^{-1}\quad \mbox{ satisfy }\quad I.T.0 \quad \mbox{ for \, all }
\quad \beta \in \pic{X} \quad \mbox{ and}$$

$$F_{\alpha}(a)\otimes F_{\alpha'}(a)\otimes M_{L^a}^{\otimes(k-1)}\otimes L^{a(n-1)} \otimes B^{-1}
\quad \mbox{ satisfy }\quad I.T.0 \quad \mbox{ for \, all }\quad \alpha,\alpha' \in \pic{X}\quad \mbox{ and}$$

 $$M_{L^a}^{\otimes(k-1)}\otimes L^{a(n-1)}\otimes B^{-1} 
 \quad \mbox{ satisfies }\quad I.T.0.$$
Proceeding in this way $k-1$ times, we see that all we need to show is that for all $1\leq k'\leq k+1$ the sheaves 
 \begin{equation}\label{F2}
 \bigotimes _{i=1}^{k'} F_{\alpha_i}(a) \otimes L^{a(n-1)}\otimes B^{-1}\quad \mbox{ satisfy }\quad I.T.0 \quad \mbox{ for any choice of }\quad 
 \alpha_1,\ldots ,\alpha_{k'}\in \pic{X} \quad \mbox{ and}
 \end{equation}
 $$L^{a(n-1)}\otimes B^{-1}\quad \mbox{ satisfies } \quad I.T.0.$$
 
 Since $a(n-1)\geq b$ we have that $L^{a(n-1)}\otimes B^{-1}$ is ample and hence $I.T.0$. 
 Now we determine the range of $a$ for which the bundles in \eqref{F2} are $I.T.0$.
 We notice that a bundle satisfies $I.T.0$ if its pull-back under $(a-1)_X$ does so.
 By \cite[Proposition 3.6]{Pa} the bundle $(a-1)_X^*F_{\alpha}(a)$ is isomorphic to a trivial bundle times a line bundle 
 algebraically equivalent to 
 $\big(-a(a-1)\big)L$. Therefore $(a-1)_X^* \big( \bigotimes_{i=1}^{k'}F_{\alpha_i}(a) \big)$ 
 is isomorphic to a trivial bundle times a line bundle algebraically 
 equivalent to $-ak'(a-1)L$. On the other hand $(a-1)_X^*\big(L^{a(n-1)}\otimes B^{-1}\big)$ is isomorphic 
 to a trivial bundle times a line bundle algebraically equivalent to 
 $$\big((a\, - \, 1)^2 \, a \, (n\, - \, 1)\big) \, L \, - \, (a\, - \, 1)^2 \, B.$$ 
 In conclusion $$(a-1)_X^* \Big(\bigotimes _{i=1}^{k'} F_{\alpha_i}(a) \otimes L^{a(n-1)}\otimes B^{-1}\Big)$$ is 
 isomorphic to a trivial bundle times a line bundle algebraically equivalent to 
 \begin{equation}\label{eq2}
 \Big((a \, - \, 1)^2 \, a\, (n \, -\, 1) \, - \, a \, k'\, (a \, - \, 1) \Big) \, L \, - \,  (a\, - \, 1)^2\, B,
 \end{equation}
 which in turn satisfies $I.T.0$ as soon as
 $$(a \, - \, 1)^2 \, a\, (n\, -\, 1)-a \, k'\, (a\, -\, 1) \, \geq \, b \, (a\, - \, 1)^2.$$
 Hence $\mu_{k}$ is surjective for all $k=1,\ldots ,r_a-n-p$ as soon as 
 $$(a \, -\, 1)\, a\, (n\, -\, 1) \, - \, a \, (r_a \,- \, n\, -\, p\, +\, 1)\, \geq \, b\, (a\, -\, 1),$$
 but this is equivalent to asking that 
 \begin{equation}\label{quad}
 (n-1)\,a^2\, - \, (r_a-p+b)\,a\, + \, b\, \geq \, 0,
 \end{equation}
which in turn is equivalent to the first inequality of \eqref{d1}.
 \end{proof}

  \begin{rmk}\label{n=2}
  Theorem \ref{main} continues to hold if one assumes 
  $n = 2$, $a\geq 3$, and $a>b$ in place of $n\geq 3$, $a\geq 2$, and $a\ge b$. The only part where this affects
  the proof is in \eqref{eq21}. 
  \end{rmk}
  
  \begin{rmk}
From  Theorem \ref{main}, for $a=2$ and $n\geq 3$ 
we obtain the vanishing $$K_{p,1}(X,L^2) \, = \, 0\quad \mbox{ for }\quad p\geq r_2-2n+3.$$ 
Note that in \cite{Pa} property $(N_p)$ was established from the cubes of a line bundle on. On the other hand, in \cite{PP2}, property 
$(N_p)$ was proved 
for the square of a line bundle with no base divisors. Here we see that in general $L^2$ satisfies $(M_{2n-3})$ without any further hypotheses. 
\end{rmk}
 Theorem \ref{main} immediately gives information on property $(M_q)$ for powers of an ample line bundle on an abelian variety, for a fixed $q$.
 
 \begin{cor}\label{Mqprop} 
  Let $X$ be an abelian variety of dimension $n\geq 2$ and let $L$ be an ample line bundle. 
  For any fixed $q\geq n$, the line bundle $L^{a}$ satisfies condition $(M_q)$ for all  
  \begin{equation}\label{a}
  a\; \geq \; \frac{q+1+\sqrt{(q+1)^2-4(n-1)}}{2(n-1)}.
  \end{equation}
 \end{cor}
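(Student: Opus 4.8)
The plan is to specialize Theorem~\ref{main} to the untwisted case $B\simeq\sO_X$ and to read the resulting numerical condition off as a quadratic inequality in $a$. First I would observe that for $B\simeq\sO_X$ the bundle $bL-B=bL$ is ample with $b=1$, so the two standing hypotheses of Theorem~\ref{main} (that $bL-B$ be ample and that $a\ge b$) hold for every $a\ge1$; with $b=1$ the first inequality of \eqref{d1} becomes $p\ge r_a-a(n-1)+(1-\tfrac1a)$.

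Next I would unwind property $(M_q)$: it demands $K_{p,1}(X,L^a)=0$ for all $p\ge r_a-q$, and since $q\ge n$ we have $r_a-q\le r_a-n$. I would split this range at $p=r_a-n$. For $p>r_a-n$ the vanishing is unconditional: one may invoke the $K_{p,1}$-theorem \cite[Theorem (3.c.1)]{Green} (an abelian variety embedded by a complete linear system is never of minimal degree), or argue directly. In the direct argument, Serre duality (recall $\omega_X\simeq\sO_X$) together with $\wedge^{p+1}M_{L^a}^{\vee}\simeq\wedge^{r_a-p-1}M_{L^a}\otimes L^a$ identifies $H^1(X,\wedge^{p+1}M_{L^a})$ with the dual of $H^{n-1}(X,\wedge^{j}M_{L^a}\otimes L^a)$ for $j:=r_a-p-1$; iterating the Koszul sequences
\[
0\to\wedge^{j}M_{L^a}\otimes L^{ma}\to\wedge^{j}H^0(X,L^a)\otimes L^{ma}\to\wedge^{j-1}M_{L^a}\otimes L^{(m+1)a}\to0
\]
and using Kodaira vanishing reduces this to $H^{n-1-j}(X,L^{(j+1)a})$, which is zero because $j\le n-2$ forces $n-1-j\ge1$. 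Thus only the block $r_a-q\le p\le r_a-n$ remains, which is exactly where Theorem~\ref{main} (and Remark~\ref{n=2} when $n=2$) applies.

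For this block I would apply Theorem~\ref{main}. Its threshold $r_a-a(n-1)+1-\tfrac1a$ is independent of $p$, so the hypothesis holds for every $p$ in the block as soon as it holds for the smallest one, $p=r_a-q$; this reads $q\le a(n-1)-1+\tfrac1a$. Clearing $a>0$ turns it into $(n-1)a^2-(q+1)a+1\ge0$, which is precisely \eqref{quad} evaluated at $r_a-p=q$ and $b=1$. Since $n\ge2$ the leading coefficient is positive and the parabola opens upward; for $q\ge n$ the discriminant $(q+1)^2-4(n-1)\ge(n-1)^2+4$ is positive, and the product of the two roots is $\tfrac1{n-1}\le1$, so the smaller root is at most $1$ and is irrelevant for integers $a\ge2$. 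Hence the inequality holds exactly for $a$ at least the larger root, namely the bound \eqref{a}.

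The one genuine subtlety---rather than an obstacle---is the splitting at $p=r_a-n$: Theorem~\ref{main} only reaches $p\le r_a-n$, so the top of the $(M_q)$-range has to be disposed of separately, and the hypothesis $q\ge n$ is precisely what lets the two blocks fit together to cover all $p\ge r_a-q$. The remaining points are routine bookkeeping: a direct estimate ($h^0(X,L^a)=a^n\chi(L)$) shows that $r_a$ far exceeds $q$ once $a\ge2$, so the endpoint $p=r_a-q$ satisfies $p\ge1$ as Lemma~\ref{vanishing} requires, and the larger-root bound automatically forces $a\ge3$ when $n=2$, in agreement with Remark~\ref{n=2}.
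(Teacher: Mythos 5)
Your proposal is correct and follows essentially the same route as the paper: specialize Theorem~\ref{main} to $B\simeq\sO_X$, $b=1$, set $q=r_a-p$, and observe that \eqref{quad} becomes $(n-1)a^2-(q+1)a+1\ge 0$, whose discriminant $(q+1)^2-4(n-1)$ is nonnegative for $q\ge n$ and whose larger root is the bound \eqref{a}, with Remark~\ref{n=2} covering $n=2$ since the bound forces $a\ge 3$ there. Your explicit treatment of the top range $p>r_a-n$ (via the $K_{p,1}$-theorem or the direct Serre-duality/Koszul argument) is a point the paper leaves implicit, delegating it to the reference to \cite[Theorem (3.c.1)]{Green} at the start of the section, but it is not a different method.
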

 
\begin{proof}
Assume first $n\geq 3$ so that, without loss of generality, we can suppose $a\geq 2$ from \eqref{a} (in particular $L^a$ is globally generated).
In the proof of Theorem \ref{main} we set $B\simeq\sO_X$, $b=1$, and $q:=r_a-p$.  
Then we notice that the discriminant of (\ref{quad}) 
in terms of $q$ is $(q+1)^2-4(n-1)$ which is non-negative for $q\geq n$. Therefore \eqref{quad} admits a positive solution which is 
given by the right-hand side of \eqref{a}.
If $n=2$, then we can suppose $a\geq 3$ from \eqref{a}. Then we apply Remark \ref{n=2} and argue as in the previous case. 
 \end{proof}
 The following corollary gives a geometric interpretation of the non-vanishing of Koszul cohomology of weight one. 
 
 \begin{cor}
  Let $L$ be an ample line bundle on an abelian variety $X$ of dimension $n\geq 3$ and let $a\geq 2$ be an integer (or choose $n = 2$ and $a\geq 3$). 
  If for some $q\geq 0$ we have $K_{r_a-n-q,1}(X,L^a)\neq 0$ where $r_a:=h^0(X,L^a)-1$, then 
 $$ n \; < \; \frac{a+1}{a}+\frac{q+1}{a-1}.$$
 \end{cor}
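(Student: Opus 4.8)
The plan is to deduce this statement as the contrapositive of Theorem \ref{main}, specialized to the untwisted case $B\simeq\sO_X$ with $b=1$. First I would record what Theorem \ref{main} asserts in this situation. Since $bL-B=L$ is ample and $a\geq 1=b$, the hypotheses are met, and the theorem says that $K_{p,1}(X,L^a)=0$ for every $p\leq r_a-n$ satisfying the specialization of \eqref{d1}, namely
$$p \, \geq \, r_a \, - \, a\,(n-1) \, + \, \Big(1 \, - \, \frac{1}{a}\Big).$$
(When $n=2$ one invokes Remark \ref{n=2} in place of Theorem \ref{main}; it licenses the same conclusion once $a\geq 3$ and $a>b=1$, so the argument below is unchanged.)

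Next I would set $p:=r_a-n-q$. Because $q\geq 0$, this value automatically respects the range restriction $p\leq r_a-n$, so the vanishing criterion above is applicable to it; moreover, in the only regime where the claimed bound can carry content the quantity $q$ is small, so that $p\geq 1$ holds as well and we remain within the admissible range of Lemma \ref{vanishing}. Consequently, the hypothesis $K_{r_a-n-q,1}(X,L^a)\neq 0$ forces the remaining inequality of \eqref{d1} to fail. Since the negation of a non-strict ``$\geq$'' is a strict ``$<$'', this yields
$$r_a \, - \, n \, - \, q \, < \, r_a \, - \, a\,(n-1) \, + \, \Big(1 \, - \, \frac{1}{a}\Big).$$

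The final step is purely algebraic. I would cancel $r_a$, move the terms in $a$ and $n$ to one side, and divide by $a-1>0$ to get $n<\big(q+a+1-\tfrac1a\big)/(a-1)$. Splitting this fraction as $\frac{q+1}{a-1}+\frac{a-1/a}{a-1}$ and simplifying via $a-\tfrac1a=\frac{(a-1)(a+1)}{a}$, so that $\frac{a-1/a}{a-1}=\frac{a+1}{a}$, recovers exactly the asserted bound $n<\frac{a+1}{a}+\frac{q+1}{a-1}$. I do not anticipate any genuine obstacle: the two points meriting a little care are confirming that $p=r_a-n-q$ lands in the range to which Theorem \ref{main} applies, and tracking that the failure of the ``$\geq$'' in \eqref{d1} produces precisely the strict inequality appearing in the conclusion.
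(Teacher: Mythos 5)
Your proposal is correct and follows essentially the same route as the paper: both take the contrapositive of Theorem \ref{main} (or Remark \ref{n=2} for $n=2$) with $B\simeq\sO_X$, $b=1$, $p=r_a-n-q$, and then rearrange the failed inequality of \eqref{d1}. The only cosmetic difference is that the paper phrases the negation as an upper bound on $a$ coming from the roots of the quadratic \eqref{quad} before translating it into the bound on $n$, whereas you negate the first inequality of \eqref{d1} directly and solve for $n$ by elementary algebra; the two are equivalent since \eqref{quad} is equivalent to that inequality.
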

 
\begin{proof}
 In Theorem \ref{main} (or Remark \ref{n=2}) we set $B\simeq \sO_X$, $b=1$ and $p=r_a-n-q$.
 Therefore by the same either $p>r_a-n$ (which is excluded by our choice of $p$), or 
 $$a \; < \; \frac{n+q+1+\sqrt{(n+q+1)^2-4(n-1)}}{2(n-1)}$$ which translates into the stated inequality for $n$.
 \end{proof}

\subsection{The general case}

In this subsection we prove the vanishing of Koszul groups of type $K_{p,1}(X,B;L)$ on a polarized abelian variety under some positivity conditions 
on the line bundles $L$ and $B$. Although the following result applies to a wider class of line bundles, 
we remark that for the case of positive powers of a fixed ample line bundle our previous Theorem \ref{main} yields a stronger 
vanishing.
 
\begin{theorem}\label{main3}
 Let $L$ be a globally generated line bundle on a polarized abelian variety $(X,H)$ of dimension $n\geq 3$ such that $L-2H$ is ample.
 Set $r:=h^0(X,L)-1$ and moreover let $B$ be a line bundle on $X$ such that $L-B$ is ample. 
 Then for $p\leq r-n$ we have
 $$K_{p,1}(X,B;L)=0$$  
 if either $p>r-2n+2$, or $p=r-2n+2$ and $-B$ is nef. 
\end{theorem}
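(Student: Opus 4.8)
The plan is to reduce Theorem \ref{main3} to Lemma \ref{vanishing}, exactly as in the proof of Theorem \ref{main}, and then verify the required $I.T.0$ conditions using the positivity hypothesis $L-2H$ ample. By Lemma \ref{vanishing}, to obtain $K_{p,1}(X,B;L)=0$ for $p\le r-n$ it suffices to prove surjectivity of the multiplication maps
\[
\mu_k \, : \, H^0(X,L)\otimes H^0\big(X,M_L^{\otimes k}\otimes \omega_X\otimes L^{n-1}\otimes B^{-1}\big)\longrightarrow H^0\big(X,M_L^{\otimes k}\otimes \omega_X\otimes L^n\otimes B^{-1}\big)
\]
for $k=0,\ldots,r-n-p$. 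On an abelian variety $\omega_X\simeq\sO_X$, so these maps involve $M_L^{\otimes k}\otimes L^{n-1}\otimes B^{-1}$. Since $p\ge r-2n+2$ is equivalent to $r-n-p\le n-2$, the number of maps to check is at most $n-1$, i.e. $k$ ranges over $0,\ldots,n-2$ (or $0,\ldots,n-1$ when $p=r-2n+2$). This is the key simplification: the positivity of $L$ is strong enough to handle this bounded range directly.

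First I would apply Theorem \ref{surjthm}(i) rather than (iii): the cleanest route is to show that for each relevant $k$ both $L(-2H)$ and $M_L^{\otimes k}\otimes L^{n-1}\otimes B^{-1}(-2H)$ are $M$-regular. The bundle $L(-2H)=L-2H$ is ample by hypothesis, hence $M$-regular. For the second factor, the central step is establishing that $M_L(-H)$, or an appropriate twist of $M_L$, carries enough regularity so that tensoring $k$ copies with the ample line bundle $L^{n-1}\otimes B^{-1}(-2H)$ stays $M$-regular. Here I would invoke the Pareschi--Popa preservation results recalled in the Preliminaries: the tensor product of two $M$-regular sheaves is $M$-regular \cite[Theorem 3.9]{PP}, and tensoring an $I.T.0$ sheaf with a nef line bundle preserves $I.T.0$. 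The plan is to peel off the factors of $M_L$ one at a time through the defining sequence
\[
0\to M_L\otimes G\to H^0(X,L)\otimes G\to L\otimes G\to 0,
\]
reducing $M$-regularity of $M_L^{\otimes k}\otimes G$ to that of $M_L^{\otimes(k-1)}\otimes L\otimes G$ together with a cohomology vanishing, and iterating. Since $L-2H$ is ample, one gains a factor of roughly $L$ (controlling a shift by $2H$) at each of the $k\le n-2$ steps, and the hypothesis is calibrated so that the positivity survives all the way down.

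The boundary case $p=r-2n+2$ with $-B$ nef requires one extra map, $\mu_{n-1}$, and here the $M$-regularity margin is exhausted; this is where the nef assumption on $-B$ enters, supplying the last bit of positivity (equivalently the extra Kodaira-type vanishing recorded in the Remark after Lemma \ref{vanishing}) needed to close the induction. I expect the main obstacle to be verifying that the iterated twists $M_L^{\otimes k}\otimes L^{n-1}\otimes B^{-1}$ retain the $M$-regularity or $I.T.0$ property through all $k$ steps, rather than losing positivity prematurely: one must check that each application of the defining sequence for $M_L$ only consumes positivity that the hypothesis $L-2H$ ample (and, in the extremal case, $-B$ nef) actually provides. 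Once this bookkeeping is arranged, surjectivity of every $\mu_k$ follows from Theorem \ref{surjthm}(i), and Lemma \ref{vanishing} delivers $K_{p,1}(X,B;L)=0$.
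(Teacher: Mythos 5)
Your proposal follows essentially the same route as the paper: reduce to the surjectivity of the maps $\mu_k$ via Lemma \ref{vanishing}, establish each one by Theorem \ref{surjthm}(i), peel off the factors of $M_L$ through the defining exact sequence at a cost of $2H$ of positivity per step (the extra $2H$ coming from the auxiliary multiplication maps $\mu(L,\,\cdot\,\otimes\alpha)$ needed to propagate the $I.T.0$ property), and use the nefness of $-B$ to close the extremal case $p=r-2n+2$. The only slip is an off-by-one in the range of $k$ (when $p=r-2n+2$ one has $r-n-p=n-2$, so $k$ runs up to $n-2$, not $n-1$), which does not affect the calibration of the argument.
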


\begin{proof}
We apply Lemma \ref{vanishing} so that we only need to verify the surjectivity of the multiplication maps
$$\mu_k \, : \, H^0(X,L)\otimes H^0(X, M_L^{\otimes k}\otimes L^{n-1}\otimes B^{-1}) \, \longrightarrow 
\, H^0(X,M_L^{\otimes k} \otimes L^{n} \otimes B^{-1}) $$ for all $k= 0,\ldots, r-n-p.$
We start with the case $k=0$ and will show that 
$$\mu_0 \, : \, H^0(X,L)\otimes H^0(X,L^{n-1}\otimes B^{-1}) \, \longrightarrow \, H^0(X,L^n\otimes B^{-1})$$ is surjective. 
Since $n\ge 3$, it follows that both $L(-2H)$ and $(L^{n-1}\otimes B^{-1})(-2H)$ are ample. By applying Theorem \ref{surjthm} (i) we obtain 
then the surjectivity of $\mu_0$.

We prove now the surjectivity of $\mu_{k+1}$ for $k = 0,\ldots , r-n-p-1$ step by step. 
By Theorem \ref{surjthm} (i) this follows if one proves that the bundle $M_L^{\otimes(k+1)}\otimes L^{n-1}(-2H)\otimes B^{-1}$ is $M$-regular. We show actually that it satisfies $I.T.0$.
By looking at the sequence 
\begin{equation}\label{exact2}
0\longrightarrow M_L^{\otimes(k+1)}\otimes L^{n-1}(-2H)\otimes B^{-1} \otimes \alpha \longrightarrow H^0(X,L)\otimes M_L^{\otimes k}\otimes L^{n-1}(-2H) 
\otimes B^{-1}\otimes \alpha 
\end{equation}
\[
\longrightarrow M_L^{\otimes k} \otimes L^{n}(-2H) \otimes B^{-1} \otimes \alpha \longrightarrow 0
\]
we infer that $M_L^{\otimes(k+1)}\otimes L^{n-1}(-2H)\otimes B^{-1}$ satisfies $I.T.0$ if the following two conditions are true (we still use the 
notation $\mu(E,E'):H^0(X,E)\otimes H^0(X,E')\rightarrow H^0(X,E\otimes E')$ to denote the multiplication map of global sections 
of two vector bundles):
\begin{equation}\label{Mreg}
M_L^{\otimes k}\otimes L^{n-1}(-2H)\otimes B^{-1} \quad \mbox{ satisfies }\quad I.T.0
\end{equation}
\begin{equation}\label{mmsur}
\mu \big(L, M_L^{\otimes k}\otimes L^{n-1}(-2H)\otimes B^{-1} \otimes \alpha \big)\quad \mbox{ is \,surjective \,for \,any }\quad  \alpha \in \pic{X}
\end{equation}
(one also needs that $M_L^{\otimes k} \otimes L^{n}(-2H) \otimes B^{-1}$ satisfies $I.T.0$, but this follows from \eqref{Mreg}).
On the other hand, by means of Theorem \ref{surjthm} (i), it is easy to see that both \eqref{Mreg} and \eqref{mmsur} hold true as soon as 
$$M_L^{\otimes k}\otimes L^{n-1}(-4H)\otimes B^{-1}  \quad \mbox{ satisfies }\quad I.T.0.$$
Proceeding this way $k$ times, we are reduced to check that $$M_L\otimes L^{n-1}\big(-2(k+1)H \big)\otimes B^{-1} \quad \mbox{ satisfies }\quad I.T.0.$$
In turn this follows from the ampleness of 
$L^{n-1}\big(-2(k+2)H\big)\otimes B^{-1}$  which is ensured by the assumption $p> r-2n+2$ (which implies that $n-1> k+2$), and the assumption on 
the nefness of $B^{-1}$ if $p=r-2n+2$.
\end{proof}

\begin{rmk}
The result remains valid for $n=2$ if in addition we assume that $B^{-1}$ is nef. Indeed, our numerical conditions force 
$p=r-2$, and hence we have only one multiplication map involved, namely $\mu_0$.
\end{rmk}

 \begin{rmk}
 The case of elliptic curves is much more specific. Let $X$ be an elliptic curve,  $L$ be a line bundle of degree $r+1$ with $r\ge 3$ and $B$ be an arbitrary a line bundle on $X$. We can identify all the cases when $K_{r-1,1}(X,B;L)=0$.
 
 From Green's duality theorem, \cite[Theorem (2.c.6)]{Green} (note that the duality does not apply in general for abelian varieties of dimension $\geq 2$), we have an isomorphism
 \[
 K_{r-1,1}(X,B;L)^*\simeq K_{0,1}(X,B^{-1};L),
 \]
 and the latter is isomorphic, by definition, to the cokernel of the multiplication map
 \begin{equation}\label{eqn:multiplication map}
 H^0(X,L)\otimes H^0(X,B^{-1})\longrightarrow H^0(X,L-B).
 \end{equation}
If $\mathrm{deg}(B)\ge 0$ and $B\not\simeq O_X$, then $H^0(X,B^{-1})=0$ and hence we obtain $K_{r-1,1}(X,B;L)\neq 0$. 
If $B\simeq \mathcal O_X$ then the map (\ref{eqn:multiplication map}) is obviously an isomorphism and hence $K_{r-1,1}(X,L)=0$. 
Note that the non-vanishing Theorem of Green and Lazarsfeld \cite[Appendix]{Green} for a decomposition $L\simeq L(-2x)\otimes \mathcal O_X(2x)$ with $x\in X$ implies that $K_{r-2,1}(X,L)\neq 0$. 
If $\mathrm{deg}(B)\leq -2$, since $h^1(X,L^{-1} \otimes B^{-1})=h^0(X,L\otimes B)\leq h^0(X,L)-2$ ($L$ is very ample), we may apply Green's  $H^0$ Lemma (\cite[Theorem (4.e.1)]{Green} to conclude that the multiplication map (\ref{eqn:multiplication map}) is surjective, i.e. $K_{r-1,1}(X,B;L) = 0$. If $B=\mathcal O_X(-x)$ with $x\in X$ then the multiplication map (\ref{eqn:multiplication map}) is not surjective which implies $K_{r-1,1}(X,B;L)\neq 0$. 
 \end{rmk}

It is possible to give an asymptotic version of Theorem \ref{main3} for line bundles of type $L_d:=d L+P$ where $L$ is ample and 
globally generated and $P$ arbitrary.
  
\begin{cor}
\label{asymptotic}
Let $L,P,B$ be line bundles on an abelian variety $X$ such that $L$ is ample and globally generated. Set $L_d:=dL+P$ and $r_d:=h^0(X,L_d)-1$.
Then there exists an integer $d_0=d_0(L,P,B)$ such that if $d\geq d_0$, then $K_{p,1}(X,B;L_d)=0$ for all $r_d-2n+2 < p \leq r_d-n$.
If in addition $-B$ is nef, then the statement is also true for $p=r_d-2n+2$.
\end{cor}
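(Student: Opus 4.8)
The plan is to apply Theorem \ref{main3} to the line bundle $L_d$, using the fixed ample line bundle $L$ itself as the polarization, i.e. taking $H=L$ (note that $(X,L)$ is indeed a polarized abelian variety since $L$ is ample). With this choice the three hypotheses of Theorem \ref{main3} imposed on $L_d$ become: (i) $L_d$ is globally generated; (ii) $L_d-2H=(d-2)L+P$ is ample; and (iii) $L_d-B=dL+P-B$ is ample. Once these three conditions are in force, Theorem \ref{main3} yields precisely $K_{p,1}(X,B;L_d)=0$ for all $p\le r_d-n$ with $p>r_d-2n+2$, together with the endpoint $p=r_d-2n+2$ under the assumption that $-B$ is nef. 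This is exactly the assertion of the corollary, so the whole matter reduces to guaranteeing (i)--(iii) for $d$ large.

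Next I would verify that (i)--(iii) all hold once $d$ is sufficiently large, thereby extracting the threshold $d_0$. For (ii) and (iii) this is immediate from the ampleness of $L$: each of $(d-2)L+P$ and $dL+P-B$ is of the form $dL$ plus a fixed line bundle, and tensoring the ample bundle $L$ a sufficiently large number of times makes any such bundle ample; hence (ii) holds for $d\ge d_2$ and (iii) for $d\ge d_3$, for suitable thresholds depending only on $L,P,B$. For (i) the relevant input is the standard fact that a high power of an ample line bundle twisted by any fixed line bundle is very ample; thus $L_d=dL+P$ is very ample, and in particular globally generated, for all $d\ge d_1$. Setting $d_0:=\max\{d_1,d_2,d_3\}$, which depends only on $L$, $P$ and $B$, conditions (i)--(iii) simultaneously hold for every $d\ge d_0$.

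The only genuinely non-routine point here is the global generation in (i); the ampleness statements (ii) and (iii) are pure bookkeeping, so there is no serious obstacle to overcome. It remains to record the dimension cases. For $d\ge d_0$ the range $r_d-2n+2<p\le r_d-n$ is nonempty exactly when $n\ge 3$, and in that case Theorem \ref{main3} applies verbatim and gives the vanishing, including the endpoint $p=r_d-2n+2$ when $-B$ is nef. When $n=2$ the open range $r_d-2n+2<p\le r_d-n$ is empty, so the only content of the statement is the endpoint $p=r_d-2n+2=r_d-2$ under the hypothesis that $-B$ is nef; this is supplied by the remark following Theorem \ref{main3}, which extends that theorem to $n=2$ precisely under the nefness of $B^{-1}$. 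Combining the two cases completes the argument.
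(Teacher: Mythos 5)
Your proposal is correct and follows essentially the same route as the paper: set $H=L$, verify that $L_d$ is globally generated and that $L_d-2L$ and $L_d-B$ are ample for $d$ large, and invoke Theorem \ref{main3}; the only cosmetic difference is that the paper assembles the threshold as $d_0=\max\{d_1+d_2,d_1+d_3\}$ by writing each bundle as a very ample piece plus a globally generated piece, whereas you appeal directly to asymptotic ampleness/very ampleness. Your explicit handling of the $n=2$ case via the remark following Theorem \ref{main3} is a small point the paper leaves implicit.
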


\begin{proof}
With the notation of Theorem \ref{main3}, 
we set $H=L$ and choose integers $d_1=d_1(L)$ such that $d_1L$ is very ample (\emph{e.g.} $d_1=3$) and $d_2=d_2(L,P)$ such that  
$(d_2-2)L+P$ is globally generated. Moreover let $d_3=d_3(L,P,B)$ be such that $d_3L+P-B$ is globally generated. 
Then for any $d\geq d_0:={\rm max}\{d_1+d_2,d_1+d_3\}$ the line bundles $L_d-2L$ and $L_d-B$ are very ample. 
Moreover $L_d$ is globally generated for all $d\geq d_0$. 
Now the corollary follows by Theorem \ref{main3}.
\end{proof}

 As a final comment, it is natural to ask how far are the results included here from being optimal. 
 Non-vanishing of syzygies can be obtained from the general result of M. Green and R. Lazarsfeld \cite[Appendix]{Green}, however, 
 the problem is how to decide if between the bounds given by their result and our bounds the Koszul cohomology groups are zero or not.

\addcontentsline{toc}{chapter}{References}

 		\end{document}